\newtheorem{thm}{Theorem}[section]
\newtheorem{lem}[thm]{Lemma}
\theoremstyle{definition}
\theoremstyle{remark}
\newtheorem{rem}[thm]{Remark}
\newcommand{\del}{\partial}
\newcommand{\dbar}{\overline{\del}}
\begin{document}

\title[Dimension of the minimum set]{Dimension of the minimum set for the real and complex Monge-Amp\`{e}re equations in critical Sobolev spaces}
\author[T. Collins]{Tristan C. Collins$^{*}$}
\address{Department of Mathematics, Harvard University, 1 Oxford Street, Cambridge, MA 02138}
\email{tcollins@math.harvard.edu}
\author[C. Mooney]{Connor Mooney$^{**}$}
\address{Department of Mathematics, ETH Z\"{u}rich, R\"{a}mistrasse 101, 8092 Z\"{u}rich, Switzerland}
\email{\tt connor.mooney@math.ethz.ch}
\thanks{$^{*}$Supported in part by National Science Foundation grant DMS-1506652, the European Research Council and the Knut and Alice Wallenberg Foundation}
\thanks{$^{**}$Supported in part by National Science Foundation fellowship DMS-1501152 and by the ERC grant ``Regularity and Stability in Partial Differential Equations (RSPDE)''}


\begin{abstract}
We prove that the zero set of a nonnegative plurisubharmonic function that solves $\det (\partial \dbar u) \geq 1$ in $\mathbb{C}^n$ and is in $W^{2, \frac{n(n-k)}{k}}$ contains no analytic sub-variety of
dimension $k$ or larger. Along the way we prove an analogous result for the real Monge-Amp\`{e}re equation, which is also new.
These results are sharp in view of well-known examples of Pogorelov and B\l ocki.
As an application, in the real case we extend interior regularity results to the case that $u$ lies in a critical Sobolev space (or more generally, certain Sobolev-Orlicz spaces).
\end{abstract}

\maketitle

\section{Introduction}
In this paper we investigate the dimension of the singular set for the real and complex Monge-Amp\`{e}re equations, assuming critical Sobolev regularity.

We first discuss the real case. It is well-known that convex (e.g. viscosity) solutions to $\det D^2u = 1$ are not always classical solutions. Pogorelov constructed examples in dimension $n \geq 3$ of the form
$$u(x',\,x_n) = |x'|^{2-\frac{2}{n}}f(x_n),$$
that solve $\det D^2u = 1$ in $|x_n| < \rho$ for some $\rho > 0$ and some smooth, positive $f$. This example is $C^{1,\,\alpha}$ for $\alpha \leq 2 - \frac{2}{n}$, and $W^{2,\,p}$ for $p < \frac{n(n-1)}{2}$.
Furthermore, this solution is not strictly convex, and its graph contains the line segment $\{x' = 0\}$.

On the other hand, it is known that strictly convex solutions are smooth. The proof of this fact is closely related to the solution of the Dirichlet problem, which has a long history, 
beginning with work of Pogorelov \cite{Pog3,Pog,Pog1, Pog2}, Cheng-Yau \cite{CY, CY1} and Calabi \cite{Cal}.  
Cheng-Yau solved the Minkowski problem on the sphere \cite{CY1}, and proved the existence of solutions to the Dirichlet problem which are smooth in the interior and Lipschitz up to the boundary \cite{CY}.  P.L. Lions
gave an independent proof of this result \cite{L1,L}.   Caffarelli-Nirenberg-Spruck \cite{CNS} and Krylov \cite{Kr2} established the existence of solutions smooth up to the boundary, provided the boundary data are $C^{3,\,1}$.  
Trudinger-Wang proved optimal boundary regularity results \cite{TW}, where the optimality comes from earlier examples of Wang \cite{W}.

\begin{rem}
In the case $n = 2$ it is a classical result of Alexandrov that solutions to $\det D^2u \geq 1$ are strictly convex \cite{A}.
\end{rem}

In view of the above discussion, to show interior regularity for $\det D^2u = 1$ it is enough to show strict convexity. 
(We remark that interior {\it estimates} generally depend on the modulus of strict convexity). Urbas \cite{U} showed strict convexity when $u$ is in $C^{1,\,\alpha}$ for $\alpha > 1 - 2/n$, or
in $W^{2,\,p}$ for $p > \frac{n(n-1)}{2}$. (Note that for these values of $p$, $W^{2,\,p}$ embeds into $C^{1,\alpha}$ for $\alpha > 1-\frac{2}{n-1}$, so neither result implies the other). 
Caffarelli showed that if $1 \leq \det D^2 u < \Lambda$ and $u$ is not strictly convex, then the graph of $u$ contains an affine set with no interior extremal points \cite{Ca}, and if 
$\det D^2u \geq 1$, then the dimension of any affine set in the graph of $u$ is strictly smaller than $\frac{n}{2}$ (\cite{Ca3}, see also \cite{M}).
These results led to interior $C^{2,\, \alpha}$ and $W^{2,\,q}$ estimates for solutions with linear boundary data, when $\det D^2u$ is strictly positive and $C^{\alpha}$, resp. $C^0$ (\cite{Ca1}).
Finally, in \cite{M} the second author showed that if $\det D^2u \geq 1$, then $u$ is strictly convex away from a set of Hausdorff $n-1$ dimensional measure zero, 
and that this is optimal by example (even when $\det D^2u = 1$).

In view of the Pogorelov example, the $C^{1,\alpha}$ hypothesis in \cite{U} is sharp, and the $W^{2,\,p}$ hypothesis is nearly sharp. 
In this paper we show interior regularity for the borderline case $p = \frac{n(n-1)}{2}$. Our result in the real case is:

\begin{thm}\label{RealMain}
Assume that $u$ is a convex solution to $\det D^2u \geq 1$ in $B_1 \subset \mathbb{R}^n$, and let $0 < k < \frac{n}{2}$.
If $u \in W^{2,\, p}(B_1)$ for some $p \geq \frac{n}{2k}(n-k)$, then the dimension of the set where $u$ agrees with a tangent plane
is at most $k - 1$.
\end{thm}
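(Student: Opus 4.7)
The plan is to argue by contradiction, combining a structural theorem of Caffarelli on the contact set with a quantitative $L^p$ estimate on the Hessian over dyadic sections.

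Suppose the contact set contains a piece of dimension $\geq k$. By Caffarelli's structure theorem for solutions of $\det D^2 u \geq 1$ (\cite{Ca3}, as reformulated in \cite{M}), it then contains a $k$-dimensional affine subspace $L$ with no interior extremal points, so $L \cap B_1$ extends to $\partial B_1$. Write $x = (y, z)$ with $y \in \mathbb{R}^k$, $z \in \mathbb{R}^{n-k}$, and (after an affine change of coordinates and subtracting the tangent plane) assume $L \cap B_1 = \{z=0\} \cap B_1$, $u \geq 0$, and $u = 0$ on $L$. Analyze the sections $S_h = \{u < h\}$ near the origin: because $L \cap B_{1/2} \subset S_h$ for every $h > 0$, John's lemma implies $S_h$ is comparable to an ellipsoid with $k$ long semi-axes $\sim 1$ (along $y$) and $n-k$ short semi-axes $b_1(h), \ldots, b_{n-k}(h)$ tending to zero. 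The Monge-Amp\`ere bound $\mu_u(S_h) \geq |S_h| \sim \prod_i a_i$ combined with the gradient estimate $|\partial u(S_h)| \lesssim \prod_i h/a_i$ then yields the Pogorelov-type geometric constraint $\prod_{i=1}^{n-k} b_i(h) \leq C h^{n/2}$.

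The key step is a quantitative $L^p$ bound on the Hessian over each dyadic shell, using 1D convexity of $u$ along rays transverse to $L$. For each $y_0 \in L$ and each unit direction $\omega \in \mathbb{R}^{n-k}$, the function $v_\omega(r) := u(y_0, r\omega)$ is convex with $v_\omega(0) = v_\omega'(0) = 0$ and $v_\omega(R(\omega)) = h$. The convexity identity $\int_0^{R(\omega)} v_\omega''(r)\, dr = v_\omega'(R(\omega)) \geq h/R(\omega)$, combined with the weighted H\"older inequality against the polar-coordinate weight $r^{n-k-1}$, gives
$$\int_0^{R(\omega)} r^{n-k-1}\,(v_\omega''(r))^p\, dr \;\geq\; c\, h^p\, R(\omega)^{n-k-2p}.$$
Rewriting as an $n$-dimensional integral via polar coordinates in $z$, integrating over $y_0 \in L \cap B_{1/2}$, and using the geometric constraint to control $R(\omega)$ (in the balanced case $R(\omega) \sim (\prod b_i)^{1/(n-k)} \leq h^{n/(2(n-k))}$) yields
$$\int_{S_h \setminus S_{h/2}} |D^2 u|^p \, dx \;\geq\; c\, h^{\frac{n}{2} - \frac{pk}{n-k}},$$
which is bounded below by a uniform positive constant precisely when $p \geq \frac{n(n-k)}{2k}$ (and diverges as $h \to 0$ when the inequality is strict).

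Summing over the disjoint dyadic shells $S_{2^{-j} h_0} \setminus S_{2^{-j-1} h_0}$ produces $\int_{B_1} |D^2 u|^p \, dx = \infty$, contradicting $u \in W^{2,p}(B_1)$. The main obstacle is the Hessian estimate in the third paragraph at the \emph{critical} exponent $p = \frac{n(n-k)}{2k}$, where the per-shell bound is only a uniform constant (with no polynomial gain in $h$), so the divergence comes entirely from summing infinitely many shells. A further technical difficulty is that the polar computation is transparent for balanced ellipsoids but must be handled carefully when the short axes $b_i(h)$ are very imbalanced (so that the cross-section is a thin pancake rather than a ``Pogorelov-balanced'' ellipsoid); this presumably requires exploiting the ``no interior extremal points'' property of $L$ from the Caffarelli reduction, perhaps by iterating over cross-sections transverse to the long transverse axes, to ensure the per-shell lower bound does not deteriorate as $h \to 0$.
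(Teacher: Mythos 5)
Your proposal takes a genuinely different route from the paper. The paper slices the domain along $\{y = \mathrm{const}\}$, proves a growth lemma ($\inf_y \sup_{|x|=r} u(x,y) \gtrsim r^{2-2k/n}$, via a barrier/volume argument), and then applies a scale-invariant convex-analysis estimate (Lemma~\ref{AnnulusMass}, an integration-by-parts argument on dyadic \emph{cylindrical annuli} $\{r < |x| < 2r\}$) to each slice. The per-annulus bound there is genuine because the rescaled convex function restricted to $B_2 \backslash B_1$ retains all the needed structure ($w(0)=0$, $\sup_{\partial B_1} w \geq 1$). You instead decompose via sections $S_h$, derive a volume constraint $\prod b_i \lesssim h^{n/2}$, and run a weighted H\"older estimate along rays transverse to $L$.

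The key gap in your argument is the per-shell estimate. The weighted H\"older computation you write down, $\int_0^{R(\omega)} r^{n-k-1}(v_\omega'')^p\,dr \geq c\,h^p\,R(\omega)^{n-k-2p}$, integrates over the \emph{entire} radial segment inside the slice. After integrating in $\omega$ and $y_0$, the lower bound you obtain is for $\int_{S_h}|D^2u|^p$, not for $\int_{S_h \setminus S_{h/2}}|D^2 u|^p$. You acknowledge this as the ``main obstacle'' but do not resolve it, and the dyadic summation over shells $S_{2^{-j}h_0}\backslash S_{2^{-j-1}h_0}$ cannot proceed without it: summing whatever bound one has over those shells merely telescopes to $\int_{S_{h_0}}|D^2u|^p$, which is finite. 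What you overlook is that your (correctly interpreted) estimate $\int_{S_h}|D^2u|^p \geq c > 0$ at the critical exponent is \emph{already} sufficient, without any summing: since $\det D^2 u \geq 1$ forces $|\{u=0\}| = 0$, we have $|S_h| \to 0$, and absolute continuity of the Lebesgue integral of $|D^2u|^p \in L^1$ forces $\int_{S_h}|D^2u|^p \to 0$, a contradiction. This is a simpler endgame than the paper's summation over annuli; adopting it closes the gap you identified. Two smaller points: (a) the ``imbalanced pancake'' worry you flag is not actually a problem --- in $\int_{S^{n-k-1}} R(\omega)^{n-k-2p}\,d\omega$ the negative exponent $n-k-2p<0$ means the shortest semi-axis direction dominates, and that axis is always controlled by the geometric mean, $b_1 \leq (\prod b_i)^{1/(n-k)} \lesssim h^{n/(2(n-k))}$, so the imbalanced case only improves the bound; (b) invoking Caffarelli's ``no interior extremal points'' property is not quite available here, since that requires a two-sided bound $\lambda \leq \det D^2u \leq \Lambda$, not just a lower bound --- but this is harmless, since as in the paper one need only restrict to a $k$-dimensional ball inside the contact set.
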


\begin{rem}
In particular, if $u \in W^{2,\, \frac{n(n-1)}{2}}$, then it is strictly convex.
We in fact show that $u$ is strictly convex if $\Delta u$ lies in Orlicz spaces that are slightly weaker than $L^{\frac{n(n-1)}{2}}$ (see Section \ref{RealMainProof}), strengthening
the result from \cite{U}. Our result is sharp in view of the Pogorelov example.

As a consequence, we can extend interior estimates to the borderline case $p = \frac{n(n-1)}{2}$ (see Section \ref{Applications}).
Interior estimates of this kind are often important in geometric applications, where one does not control the boundary data.
\end{rem}

\begin{rem}
There are analogues of the Pogorelov example that vanish on sets of dimension $k$ for any $k < \frac{n}{2}$, and are not in $W^{2,\, \frac{n}{2k}(n-k)}$ (\cite{Ca3}). 
These show that Theorem \ref{RealMain} is also sharp in the case $k > 1$.
\end{rem}

We now discuss the complex case. Like in the real case, there exist singular Pogorelov-type examples of the form
$$u(z', \,z_n) = |z'|^{2-\frac{2}{n}}f(z_n)$$
for $n \geq 2$, such that $\det (\partial \dbar u)$ is a strictly positive polynomial (\cite{B}).

\begin{rem}
In fact, there are analogues of this example that vanish on sets of complex dimension $k$ for any $k < n$. Furthermore, these singular examples are global.
\end{rem}

Less is known about interior regularity for the complex Monge-Amp\`{e}re equation $\det (\partial \dbar u) = 1$. B\l ocki and Dinew \cite{BD} showed that if $u \in W^{2,\,p}$ for some $p > n(n-1)$,
then $u$ is smooth. This result relies on an important estimate of Ko\l odziej \cite{Ko}. The same result is true provided $\Delta u$ is bounded (see e.g. \cite{Wa}). In this case the point is that the operator becomes
uniformly elliptic, and by its concavity an important $C^{2,\alpha}$ estimate of Evans and Krylov (see e.g. \cite{CC}) applies. Thus far, there does not seem to be a geometric condition analogous to strict convexity that guarantees
interior regularity.

However, if $u$ is nonnegative then something can be said about analytic structures in the minimum set.  A classical theorem of Harvey and Wells \cite{HW} says that the minimum set of a smooth, strictly plurisubharmonic function is contained in a $C^1$, totally real submanifold. Dinew and Dinew \cite{DD} recently showed that if $\det (\partial \dbar u)$ has a positive lower bound and $u \in C^{1,\,\alpha}$ for $\alpha > 1 - \frac{2k}{n}$ (or $C^{\beta}$ for $\beta > 2-\frac{2k}{n}$ in the case $k > \frac{n}{2}$),
then the minimum set of $u$ contains no analytic sub-varieties of dimension $k$ or larger. We investigate the same situation assuming Sobolev regularity.
In the complex case, our main result is:

\begin{thm}\label{ComplexMain}
Assume that $u$ is a nonnegative plurisubharmonic function satisfying $\det (\partial \dbar u) \geq 1$ in $B_1 \subset \mathbb{C}^n$, and let $0 < k < n$. If $\Delta u \in L^p$ for some $p \geq \frac{n}{k}(n-k)$, then the zero set $\{u = 0\}$ contains
no analytic sub-varieties of dimension $k$ or larger.
\end{thm}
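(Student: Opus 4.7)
I would argue by contradiction: suppose $V\subset \{u = 0\}$ is an analytic sub-variety of complex dimension $k$. Localizing near a smooth point of $V$ and choosing a holomorphic chart, one may take $V = \{z' = 0\}$ with $z = (z', z'') \in \mathbb{C}^{n-k} \times \mathbb{C}^k$. Work on the tube $T_r = \{|z'| < r\} \cap \{|z''| < \tfrac{1}{2}\}$ and set $M(r) := \|u\|_{L^\infty(T_{2r})}$, which tends to $0$ as $r \to 0$ since $u = 0$ on $V$ and $u$ is continuous (using $\Delta u \in L^p_{\mathrm{loc}}$ and standard regularity theory for the Laplace operator). The plan is to derive two competing bounds on $M(r)$ and on $\int_{T_r}(\Delta u)^p\, dV$ that collide at the critical exponent $p = n(n-k)/k$.

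The core analytic tool is the Chern--Levine--Nirenberg (CLN) inequality for bounded plurisubharmonic functions, which bounds integrals of mixed wedge products of $\ddb$-forms by products of $L^\infty$-norms. Applying CLN with $u$ in all $n$ slots and using $(\ddb u)^n \geq n!\, dV$ yields the Caffarelli-type \emph{mass bound}
\[
c\, r^{2(n-k)} \leq \mathrm{Vol}(T_r) \leq \int_{T_r}(\ddb u)^n \leq C\, M(r)^n, \qquad \text{hence}\quad M(r) \geq c\, r^{2(n-k)/n}.
\]
Next, employing the PSH test function $\phi(z) = |z'|^2$, which vanishes on $V$ and is bounded by $4r^2$ on $T_{2r}$, CLN applied to the mixed wedge $(\ddb u)^k \wedge (\ddb \phi)^{n-k}$ yields
\[
\int_{T_r}(\ddb u)^k \wedge (\ddb \phi)^{n-k} \leq C\, M(r)^k\, r^{2(n-k)}.
\]
A direct wedge-product calculation identifies the integrand as a constant multiple of $\det(\partial_{z''}\dbar_{z''} u)\, dV$---the determinant of the $k \times k$ tangential block of $\ddb u$. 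Fischer's inequality for the block decomposition $\ddb u = \bigl(\begin{smallmatrix}A & B \\ B^* & C\end{smallmatrix}\bigr)$ gives $\det A \cdot \det C \geq \det(\ddb u) \geq 1$, and AM--GM on the $(n-k) \times (n-k)$ normal block $A$ yields $\det A \leq c(\Delta u)^{n-k}$. Combining, $\det C \geq c(\Delta u)^{-(n-k)}$ almost everywhere, so
\[
\int_{T_r} (\Delta u)^{-(n-k)}\, dV \leq C\, M(r)^k\, r^{2(n-k)}.
\]

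The hypothesis $\Delta u \in L^p$ enters through Hölder's inequality applied to $1 = (\Delta u)^{\alpha} (\Delta u)^{-\alpha}$ with $\alpha = p(n-k)/(n-k+p)$, pairing the $L^p$ and negative-exponent integrals:
\[
\mathrm{Vol}(T_r) \leq \Big(\int_{T_r}(\Delta u)^p\, dV\Big)^{\!\tfrac{n-k}{n-k+p}} \Big(\int_{T_r}(\Delta u)^{-(n-k)}\, dV\Big)^{\!\tfrac{p}{n-k+p}}.
\]
Substituting the preceding estimate and simplifying---exploiting the algebraic identity $kp/(n-k) = n$ at $p = n(n-k)/k$---one obtains the reverse inequality
\[
\int_{T_r}(\Delta u)^p\, dV \geq \frac{c\, r^{2(n-k)}}{M(r)^n}.
\]
Coupling this with the Caffarelli mass bound saturates the inequality, forcing $\int_{T_r}(\Delta u)^p\, dV \gtrsim 1$ uniformly in small $r$. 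Since $T_r$ shrinks to the Lebesgue-null set $V$ and $(\Delta u)^p \in L^1_{\mathrm{loc}}$, dominated convergence gives $\int_{T_r}(\Delta u)^p\, dV \to 0$, a contradiction.

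The main obstacle lies precisely in this final coupling: naively, the mass bound $M(r) \geq c\, r^{2(n-k)/n}$ only gives an \emph{upper} bound $r^{2(n-k)}/M(r)^n \leq C$, and producing a uniform positive lower bound requires showing that $M(r)$ is actually of order $r^{2(n-k)/n}$---i.e., that the Caffarelli bound is saturated---reflecting the Pogorelov-type extremal behavior at the threshold. I expect this is achieved through a more refined application of CLN, either by centering $T_r$ at a generic Lebesgue point of $(\Delta u)^p$, by iterating the H\"older estimate along a dyadic sequence of radii, or by an Orlicz-space refinement that extracts extra information at the critical exponent; this sharp saturation step is the technical heart of the result, and its parallel in the real case (Theorem \ref{RealMain}) rests on the convex geometry of sections. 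For $p$ strictly greater than $n(n-k)/k$ a simpler direct argument suffices, so the critical case truly contains the whole content of the theorem.
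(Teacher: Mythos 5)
Your first two building blocks are sound: the comparison-principle growth estimate $M(r)\geq c\,r^{2(n-k)/n}$ is essentially the paper's Lemma~\ref{ComplexSingularityGrowth}, and the Fischer-inequality trick $\det C\geq c(\Delta u)^{-(n-k)}$ combined with a CLN-type estimate for $(\ddb u)^k\wedge(\ddb\phi)^{n-k}$ is a perfectly reasonable way to convert the PDE into integral information. The H\"older step is also computed correctly. The problem is the one you yourself flag, and it is not a technicality — it is fatal to this line of attack. Your chain of inequalities yields
\[
\int_{T_r}(\Delta u)^p\,dV\;\geq\;c\,\frac{r^{2(n-k)}}{M(r)^n},
\]
and the only control on $M(r)$ in your possession, $M(r)\geq c\,r^{2(n-k)/n}$, makes the right-hand side \emph{small}, not large. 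To get a contradiction you would need a matching upper bound $M(r)\leq C\,r^{2(n-k)/n}$, i.e. that $u$ grows no faster than a Pogorelov model away from $V$. Nothing in the hypotheses gives this (PSH functions with $\det(\partial\dbar u)\geq1$ can be very large off $\{u=0\}$), and proving a sharp upper bound of that form would be a separate theorem of roughly the same difficulty as the one you are trying to prove. The three fixes you sketch — generic Lebesgue point, dyadic iteration of H\"older, Orlicz refinement — are not carried out, and none obviously supplies the missing upper bound; in particular a Lebesgue-point argument cannot help because the inequality fails \emph{at every} $r$, not just on a null set.

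The paper avoids this obstruction entirely by never trying to bound $\sup u$ from above. It rescales anisotropically, $u_r(z,w)=r^{-(2-2k/n)}u(rz,w)$, so that the $W^{2,n(n-k)/k}$ norm in the $z$-slices is scale-invariant, and then uses an annulus dichotomy (Lemma~\ref{AnnulusDichotomy}, a Morrey-embedding statement) applied slice-by-slice in $w$: for a.e.\ $w$, either the Hessian mass in $\{\epsilon<|z|<2\}$ is at least a fixed multiple of $M(w):=u_r(z_0,w)$, or a much larger amount of mass concentrates in $\{|z|<2\epsilon\}$. Since $M(w)$ is positive and subharmonic and $M(w_0)\geq c(n)$ by the growth lemma, the mean value inequality forces $\int M(w)\,dw\geq c(n)$; the global $W^{2,p}$ bound then caps how often the concentration alternative can occur, so the annulus alternative must carry a fixed fraction of this mass. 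Undoing the rescaling and summing over dyadic annuli in $|z|$ contradicts $\Delta u\in L^p$. That argument uses only the \emph{lower} bound on the growth of $u$, plus the scale-invariance of the critical norm — exactly the piece your scheme is missing.
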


\begin{rem}
Since $W^{2, \frac{n}{k}(n-k)}$ embeds into $C^{1,\, 1-\frac{2k}{n-k}}$, this result is different from that in \cite{DD}.
It is sharp in view of Pogorelov-type examples.
\end{rem}

\begin{rem}
It is not known whether all singularities of solutions to $\det (\partial \dbar u) = 1$ arise as analytic sub-varieties, or that they occur on a complex analogue of the agreement set with a tangent plane.
Thus, Theorem \ref{ComplexMain} does not immediately imply smoothness of solutions to $\det (\partial \dbar u) = 1$ when $u \in W^{2,\, n(n-1)}$ (unlike in the real case).
\end{rem}

The critical Sobolev spaces arise naturally in geometric applications.  For example, in complex dimension $2$ the $L^2$ norm of the Laplacian is a scale invariant, monotone quantity whose concentration controls, at least qualitatively, the regularity of functions with Monge-Amp\`ere mass bounded below.  In this sense, Theorem~\ref{ComplexMain} can be seen as a step toward understanding the regularity and compactness properties of sequences of (quasi)-PSH functions with lower bounds for the Monge-Amp\`ere mass, which arise frequently in K\"ahler geometry.

The proof of Theorem \ref{RealMain} relies on two key observations. The first is that $u$ grows at least like $\text{dist.}^{2-\frac{2k}{n}}$ away from a zero set of dimension $k$. The second is that the $W^{2,\frac{n}{2k}(n-k)}$ norm is invariant
under the rescalings that fix the $k$-dimensional zero set, and preserve functions with this growth. By combining these observations with some convex analysis, 
we show that the mass of $(\Delta u)^{\frac{n}{2k}(n-k)}$ is at least some fixed positive constant in each dyadic annulus around the zero set.

In the complex case the strategy is similar, but an important difficulty is that we don't have convexity. We overcome this in two ways. First, using subharmonicity along complex lines we can say that $u$ grows at a certain rate from its zero 
set at many points. Second, we use a dichotomy argument: either the mass of $|D^2u|^{\frac{n(n-k)}{k}}$ is at least a small constant in an annulus around the zero set, or it is very large and concentrates close to the zero set. 
Using that the $W^{2,\frac{n(n-k)}{k}}$ norm is bounded, we can rule out the second case and proceed as before.

The paper is organized as follows. In Section \ref{Preliminaries} we prove some estimates from convex analysis that are useful in the real case. We then prove an analogue in the general setting that is useful in the complex
case. In Section \ref{RealMainProof} we prove Theorem \ref{RealMain}. In Section \ref{ComplexMainProof} we prove Theorem \ref{ComplexMain}. Finally, in Section \ref{Applications} we give some applications of
Theorem \ref{RealMain} to interior estimates for the real Monge-Amp\`{e}re equation.


\section{Preliminaries}\label{Preliminaries}
Here we prove some useful functional inequalities.
The first inequality is from convex analysis. This will be used to prove Theorem \ref{RealMain}. 
We then prove a certain analogue in the general setting. This will be used to prove Theorem \ref{ComplexMain}.

\subsection{Estimate from Convex Analysis}

\begin{lem}\label{AnnulusMass}
Let $n \geq 2$ and let $w$ be a nonnegative convex function on $B_2 \subset \mathbb{R}^n$, with $w(0) = 0$ and $\sup_{\partial B_1} w \geq 1$. Then there is some positive dimensional constant $c(n)$ such that
$$\int_{B_2 \backslash B_1} \Delta w \,dx > c(n).$$
\end{lem}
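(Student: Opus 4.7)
The plan is to combine the divergence theorem with a monotonicity formula that reduces the annular integral to a ball integral, and then bound the ball integral by comparison with a piecewise-linear extremum. By approximation I may assume $w$ is smooth. Choose $p\in\partial B_1$ with $w(p)\geq 1$; convexity of $t\mapsto w(tp)$ together with $w(0)=0$ shows that $w(tp)/t$ is non-decreasing, giving $w(tp)\geq t$ for all $t\geq 1$ and in particular $w(2p)\geq 2$. The same one-dimensional fact yields on every ray the pointwise radial bound $\partial_r w(r\theta)\geq w(r\theta)/r$.

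Set $\phi(r)=\int_{S^{n-1}}w(r\theta)\,d\sigma$. Radial convexity of $w$ makes $\phi$ convex, and by the divergence theorem $r^{n-1}\phi'(r)=\int_{B_r}\Delta w$. Convexity of $\phi$ then produces the classical monotonicity that $r\mapsto r^{1-n}\int_{B_r}\Delta w$ is non-decreasing. Evaluating at $r=1,2$ gives $\int_{B_1}\Delta w\leq 2^{1-n}\int_{B_2}\Delta w$, whence
\[
\int_{B_2\setminus B_1}\Delta w\;\geq\;\left(1-2^{1-n}\right)\int_{B_2}\Delta w,
\]
and it suffices to lower-bound $\int_{B_2}\Delta w$ by a positive dimensional constant.

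For the ball bound, fix any subgradient $q\in\partial w(2p)$. The supporting inequality $w(x)\geq 2+\langle q,x-2p\rangle$ tested at $x=0$ forces $\langle q,p\rangle\geq 1$, so $|q|\geq 1$. The piecewise-linear comparison $\tilde w(x)=\max\bigl(0,\,2+\langle q,x-2p\rangle\bigr)$ lies pointwise below $w$ and has $\Delta\tilde w=|q|\,\mathcal{H}^{n-1}\!\lfloor_H$ supported on a hyperplane $H$ of distance $d=2(\langle q,p\rangle-1)/|q|\in[0,2)$ from the origin, with $\int_{B_2}\Delta\tilde w=|q|\omega_{n-1}(4-d^2)^{(n-1)/2}$. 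In the extremal case $w_0=\max(0,\langle p,x\rangle)$, corresponding to $|q|=1$ and $d=0$, this equals $\omega_{n-1}2^{n-1}$, leading to the conjectural sharp constant $c(n)=\omega_{n-1}(2^{n-1}-1)$.

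The main obstacle is upgrading $w\geq\tilde w$ to the integrated inequality $\int_{B_2}\Delta w\geq\omega_{n-1}2^{n-1}$: pointwise inequalities of Laplacians fail in general (cf.\ $w=x_1^4$ versus its tangent affinization at $e_1$), and even $w\geq w_0$ can fail pointwise (e.g.\ $w=x_1^2$ on $[0,1]$). My plan is to argue via the boundary representation $\int_{B_2}\Delta w=\int_{\partial B_2}\partial_r w\,dS$, combining the radial bound $\partial_r w\geq w/2$ on $\partial B_2$ with the explicit form $\partial_r\tilde w(2\theta)=\langle q,\theta\rangle\chi_{\{\ell(2\theta)\geq 0\}}$ and a careful spherical integration that leverages $w\geq\tilde w$ to cover the non-extremal range of $(|q|,d)$. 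Should this direct route prove too delicate, a dichotomy paralleling the one the authors explicitly advertise for the complex case — splitting on whether the sphere mean $\phi(2)$ is bounded below or is small (forcing angular concentration of $w$ and hence a large $\Delta w$ mass near the concentration locus) — provides a robust alternative.
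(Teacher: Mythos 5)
Your reduction of the annular integral to the ball integral is correct and matches the paper in substance: the monotonicity $\phi'(1)\leq\phi'(2)$ (equivalently, $\partial_r w$ nondecreasing along rays, which is what the paper uses) gives $\int_{B_2\setminus B_1}\Delta w\geq(1-2^{1-n})\int_{B_2}\Delta w$. The gap is in the second half. You correctly identify that $w\geq\tilde w$ does not control $\int\Delta w$ by $\int\Delta\tilde w$, and you correctly switch to the boundary representation $\int_{B_2}\Delta w=\int_{\partial B_2}\partial_r w\geq\tfrac12\int_{\partial B_2}w\geq\tfrac12\int_{\partial B_2}\tilde w$. But this last quantity is \emph{not} bounded below by a dimensional constant: taking $w(x)=\max\bigl(0,\,1+N(x_n-1)\bigr)$ with $N$ large gives $p=e_n$, $q=Ne_n$, $d=2-2/N$, and your $\tilde w(2\theta)=2\max(0,\,1-N(1-\theta_n))$ is supported on a cap of half-angle $\sim N^{-1/2}$ with height $2$, so $\int_{\partial B_2}\tilde w\sim N^{-(n-1)/2}\to 0$. (The true integral $\int_{\partial B_2}w\sim N$ is large, but your $\tilde w$ throws that away by replacing $w(2p)=N+1$ with the crude lower bound $2$.) There is also a technical defect: $2p\in\partial B_2$, where $w$ is not assumed finite, so the subgradient $q\in\partial w(2p)$ need not exist.

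What is missing is the observation the paper exploits: choose $p$ to be the point where $w|_{\partial B_1}$ is \emph{maximized}, say $p=e_n$ with $M:=w(e_n)\geq 1$, and take the supporting plane at $p$ rather than at $2p$. If $q\in\partial w(e_n)$, then $q\cdot\theta\leq q\cdot e_n$ for all $\theta\in S^{n-1}$ (since $w(\theta)\leq M$ and $\ell\leq w$), which forces $q=|q|e_n$; and $\ell(0)\leq 0$ forces $|q|\geq M$. Hence $w(y)\geq\ell(y)=M+|q|(y_n-1)\geq 1$ on the half-space $\{y_n\geq 1\}$, and in particular on the \emph{fixed} cap $\partial B_2\cap\{y_n\geq 1\}$ of positive measure. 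Combined with $\partial_r w\geq w/2$ on $\partial B_2$ and $\partial_r w\geq 0$, this yields $\int_{\partial B_2}\partial_r w\geq c(n)$, finishing the proof. Your dichotomy fallback is not worked out and in this convex setting is both heavier machinery than needed and not obviously sharp; the maximality of $p$, which pins the subgradient direction, is the decisive and missing ingredient.
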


\begin{proof}
By integration by parts, we have
$$\int_{B_2 \backslash B_1} \Delta w\,dx = \int_{\partial B_2} \partial_rw \,ds - \int_{\partial B_1} \partial_rw \,ds,$$
where $\partial_r$ denotes radial derivative.
By convexity, $\partial_r w$ is increasing on radial lines. We conclude that
$$\int_{B_2 \backslash B_1} \Delta w\,dx \geq \frac{1}{2} \int_{\partial B_2} \partial_r w\,ds.$$
Assume that the maximum of $w$ on $\partial B_1$ is achieved at $e_n$. By convexity, $w \geq 1$ in $B_2 \cap \{x_n \geq 1\}$, hence $\partial_rw > \frac{1}{2}$ on $\partial B_2 \cap \{x_n \geq 1\}.$
Since $\partial_r w \geq 0$, the conclusion follows.
\end{proof}

As a consequence, the Sobolev regularity of a convex function whose maximum on $\partial B_r$ grows like $r^q$ is no better than that of $r^q$:

\begin{lem}\label{CriticalBlowup}
Assume that $w$ is a nonnegative convex function on $B_1 \subset \mathbb{R}^n$ ($n \geq 2$), such that $w(0) = 0$ and $\sup_{\partial B_r} w \geq r^q$ for some $q \in [1,\,2)$, and all $r < 1$. Then
$$\int_{B_1 \backslash B_r} (\Delta w)^{\frac{n}{2-q}}\,dx \geq c(n,\,q) |\log r|$$
for some $c(n,\,q) > 0$ and all $r \in \left(0,\,\frac{1}{2}\right)$.
\end{lem}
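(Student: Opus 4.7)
The plan is to combine Lemma \ref{AnnulusMass} with a dyadic rescaling argument, using the fact that $p := n/(2-q)$ is exactly the critical exponent for which $\|\Delta w\|_{L^p}$ is invariant under the scalings that preserve the growth rate $r^q$.

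For each integer $j \geq 1$, I would introduce the rescaled function $w_j(x) := 2^{jq} w(2^{-j} x)$ on $B_2$. It is nonnegative, convex, vanishes at the origin, and satisfies $\sup_{\partial B_1} w_j \geq 2^{jq} \cdot (2^{-j})^q = 1$, so Lemma \ref{AnnulusMass} applies and gives $\int_{B_2 \setminus B_1} \Delta w_j \, dx > c(n)$. Unwinding the scaling via $\Delta w_j(x) = 2^{j(q-2)} (\Delta w)(2^{-j} x)$ and changing variables yields
\[
\int_{A_j} \Delta w \, dy \;>\; c(n) \, 2^{-j(n+q-2)},
\]
where $A_j := B_{2^{1-j}} \setminus B_{2^{-j}}$.

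The next step is to upgrade this $L^1$ bound to an $L^p$ bound by H\"older's inequality. Since $|A_j| \leq C(n) \, 2^{-jn}$, one obtains
\[
\int_{A_j} (\Delta w)^p \, dy \;\geq\; \frac{\left(\int_{A_j} \Delta w\,dy\right)^p}{|A_j|^{p-1}} \;\geq\; c(n,q) \, 2^{j\,[p(2-q) - n]}.
\]
The crucial observation is that $p(2-q) - n = 0$ precisely when $p = n/(2-q)$; this is the content of the lemma and explains why no other exponent could appear in the statement. With this cancellation, each dyadic annulus $A_j$ contributes at least a dimensional constant $c(n,q)$ to the $L^p$ mass of $\Delta w$.

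To conclude, for $r \in (0, 1/2)$ I would take $J := \lfloor \log_2(1/r) \rfloor$, observe that the disjoint annuli $A_1, \ldots, A_J$ all lie in $B_1 \setminus B_r$, and sum the uniform lower bounds to get $\int_{B_1 \setminus B_r} (\Delta w)^p \, dx \gtrsim J \gtrsim |\log r|$. There is no real obstacle beyond bookkeeping the scaling arithmetic; the substantive step is the identification of the critical exponent that makes the rescaled estimates add up rather than decay.
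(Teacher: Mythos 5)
Your argument is correct and follows essentially the same route as the paper: apply Lemma \ref{AnnulusMass} to the rescaled functions, upgrade the resulting $L^1$ annulus bound to $L^{n/(2-q)}$ by H\"older/Jensen (the paper leaves this step implicit and only spells it out in the subsequent Orlicz remark), and sum over dyadic annuli, with the scaling invariance of the $L^{n/(2-q)}$ norm producing the uniform constant. The only difference is presentational: the paper keeps everything in terms of the rescaled function $w_\rho$ on a fixed annulus, while you unwind the scaling and track the powers of $2$ explicitly, which has the virtue of making the criticality of the exponent $p = n/(2-q)$ visible as an algebraic cancellation.
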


\begin{rem}
We take $q \geq 1$ since convex functions are locally Lipschitz.
\end{rem}

\begin{proof}
Fix $\rho < 1/2$ and let $w_{\rho}(x) = {\rho}^{-q}w(\rho x)$. Note that the $L^{\frac{n}{2-q}}$ norm of $\Delta w$ is invariant under such rescalings. We conclude from this observation and Lemma \ref{AnnulusMass} that
$$\int_{B_{2\rho} \backslash B_{\rho}} (\Delta w)^{\frac{n}{2-q}} \,dx = \int_{B_2 \backslash B_1} (\Delta w_{\rho})^{\frac{n}{2-q}}\,dx \geq c(n,\,q).$$
The estimate follows by summing this inequality over dyadic annuli.
\end{proof}

\begin{rem}\label{OrliczGeneral}
One can refine this estimate to Orlicz norms. Let $F: [0,\, \infty) \rightarrow [0,\, \infty)$ be a convex function with $F(0) = 0$.
By Lemma \ref{AnnulusMass} we have 
$$\frac{1}{|B_{2r} \backslash B_r|} \int_{B_{2r} \backslash B_r} \Delta u\,dx > c(n) r^{q - 2}.$$ 
Using Jensen's inequality and summing over dyadic annuli, we obtain
$$\int_{B_1} F(\Delta u / \lambda)\,dx \geq \sum_{k = 1}^{\infty} |B_{2^{-k}} \backslash B_{2^{-k-1}}|F\left(c(n) 2^{-k(q-2)}/\lambda\right).$$
In particular, the Orlicz norm $\|\Delta u\|_{L^F(B_1)} = \infty$ if
$$\int_{1}^{\infty} t^{-\frac{n}{2-q}}F(t)\,\frac{dt}{t} = \infty.$$
Examples $F(t)$ that agree with $t^{\frac{n}{2-q}}|\log t|^{-p}$ for $t$ large and $0 \leq p \leq 1$ satisfy this condition, and give weaker norms 
than $L^{\frac{n}{2-q}}$. (For a reference on Orlicz spaces, see e.g. \cite{Sim}).
\end{rem}

\subsection{Estimate without Convex Analysis}

The following estimate is a certain analogue of Lemma \ref{AnnulusMass}, in the general setting.

\begin{lem}\label{AnnulusDichotomy}
Let $w$ be a nonnegative function on $B_2 \subset \mathbb{R}^n$ with $w(0) = 0$, and let $p > \frac{n}{2}$. Then there exists $c_0 > 0$ depending
on $n,\,p$ such that for all $\epsilon \in (0,\,1)$, there exists some $\delta (\epsilon,\,n,\,p)$ such that either
$$ \left(\int_{B_2 \backslash B_{\epsilon}} |D^2w|^p \,dx\right)^{\frac{1}{p}} \geq \delta \sup_{\partial B_1} w,$$
or
$$\left(\int_{B_{2\epsilon}} |D^2w|^p \,dx\right)^{\frac{1}{p}} \geq c_0\epsilon^{\frac{n}{p}-2} \sup_{\partial B_1} w.$$
\end{lem}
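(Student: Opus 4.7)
The plan is to argue by contradiction. After normalizing $v := w/M$ with $M := \sup_{\partial B_1} w$ (assumed positive; otherwise both conclusions are trivial), suppose both alternatives fail:
$$\|D^2 v\|_{L^p(B_2 \setminus B_\epsilon)} < \delta, \qquad \|D^2 v\|_{L^p(B_{2\epsilon})} < c_0 \epsilon^{n/p - 2},$$
with $c_0 = c_0(n,p)$ and $\delta = \delta(n,p)$ small, to be chosen. The goal is to contradict the existence of a point $x_0 \in \partial B_1$ with $v(x_0) = 1$, together with $v(0) = 0$ and $v \geq 0$.

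The main tool is the Sobolev-Morrey inequality: since $p > n/2$, every $u \in W^{2,p}$ on the standard unit annulus admits an affine approximant whose $L^\infty$-error is bounded by a constant (depending only on $n,p$) times $\|D^2 u\|_{L^p}$. By scaling, on each dyadic annulus $A_k := B_{2 r_k} \setminus B_{r_k}$ with $r_k := 2^k \epsilon$ for $k = 0, 1, \ldots, K$ (where $K$ is chosen so that $r_K \in (1/2, 1]$), we obtain affine $L_k(x) = a_k + b_k \cdot x$ with
$$\sup_{A_k} |v - L_k| \leq C(n,p)\, r_k^{\alpha}\, \|D^2 v\|_{L^p(A_k)} =: \eta_k, \qquad \alpha := 2 - n/p > 0.$$
Since the $A_k$ tile $B_{2 r_K} \setminus B_\epsilon \subset B_2 \setminus B_\epsilon$, H\"older's inequality combined with the geometric summability of $\sum_k r_k^{\alpha p/(p-1)}$ gives $\sum_k \eta_k \leq C(n,p)\, \delta$.

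Next we chain the approximants. On the shared boundary $\partial B_{r_{k+1}}$ both $|v - L_k| \leq \eta_k$ and $|v - L_{k+1}| \leq \eta_{k+1}$, so by elementary comparison of affine functions on a sphere, $|a_k - a_{k+1}| \leq \eta_k + \eta_{k+1}$. At the outermost scale, the nonnegativity of $v$ on $A_K$ gives $|b_K| \leq (a_K + \eta_K)/(2 r_K)$; combining this with $a_K + b_K \cdot x_0 \geq 1 - \eta_K$ (valid since $x_0 \in \partial B_1 \subset \overline{A_K}$ and $v(x_0) = 1$) yields $a_K \geq 1/2 - \eta_K$. Telescoping gives $a_0 \geq 1/2 - C(n,p)\, \delta \geq 1/4$ once $\delta$ is small. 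The same nonnegativity bound on $A_0 = B_{2\epsilon} \setminus B_\epsilon$ produces $L_0 \geq a_0/2 - \eta_0/2$ on $\partial B_\epsilon$, so that $v \geq 1/32$ there.

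Finally, feeding the second smallness hypothesis into Sobolev-Morrey on $B_{2\epsilon}$ yields an affine $\tilde L$ with $\sup_{B_{2\epsilon}} |v - \tilde L| \leq C_1(n,p)\, c_0$ (the $\epsilon$ factors cancel because $(2\epsilon)^\alpha \cdot \epsilon^{n/p - 2} = 2^\alpha$). Using $v(0) = 0$ and $v \geq 0$ on $B_{2\epsilon}$ analogously bounds $v \leq C_2(n,p)\, c_0$ on $\partial B_\epsilon$, contradicting the previous lower bound once $c_0$ is chosen small depending only on $n, p$. The most delicate step is the telescoping: one must extract summability of $\sum \eta_k$ uniformly in $\epsilon$ (which works precisely because $p > n/2$ makes $\alpha > 0$) and simultaneously use nonnegativity at each scale to control the linear parts $b_k$, so that the information at the outer scale propagates undiluted down to $\partial B_\epsilon$.
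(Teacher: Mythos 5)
Your proof is correct, but it takes a genuinely different route from the paper's. The paper's argument is a single, short compactness-by-rescaling step: if the first alternative fails, it applies Sobolev--Poincar\'e--Morrey once on the whole punctured ball $B_2 \setminus B_{\epsilon}$ (with constant depending on $\epsilon$, which is why $\delta$ is allowed to depend on $\epsilon$ in the statement) to get a single affine $l$ approximating $w$ there; the hypotheses $w \geq 0$, $w(0)=0$, $\sup_{\partial B_1} w = 1$ then force $l(0) > 1/2$, and rescaling $\tilde w = (w - l)(\epsilon \cdot)$ to the unit scale makes the drop from $|\tilde w| < 1/8$ on $B_2 \setminus B_1$ to $\tilde w(0) < -1/2$ cost a definite amount of $\|D^2\tilde w\|_{L^p(B_2)}$, which scales back to the second alternative. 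Your argument replaces the one-shot Poincar\'e estimate on the annulus $B_2 \setminus B_\epsilon$ by a dyadic decomposition and a chaining/telescoping of affine approximants across scales, summing the errors $\eta_k$ via H\"older; because $\alpha = 2 - n/p > 0$ the geometric sum converges uniformly in $\epsilon$, so you obtain the sharper conclusion that $\delta$ may be chosen \emph{independent} of $\epsilon$. That strengthening is not needed for Theorem~\ref{ComplexMain}, but it is a genuine gain. The trade-off is that your route requires carefully propagating both the constant terms $a_k$ \emph{and} the linear parts $b_k$ through the chain (using nonnegativity on each annulus to control $|b_k|$ in terms of $a_k$), whereas the paper sidesteps all of this with one rescaling at the inner scale. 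The endgame on $B_{2\epsilon}$ is essentially the same in both arguments: the scale-invariant cancellation $(2\epsilon)^{2-n/p}\cdot \epsilon^{n/p-2} = \mathrm{const.}$ produces an $\epsilon$-independent $c_0$.
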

\begin{proof}
After multiplying by a constant we may assume that $\sup_{\partial B_1} w = 1$.
Assume that the first case is not satisfied. Then by the Sobolev-Poincar\'e and Morrey inequalities we have
$$\|w - l\|_{L^{\infty}(B_2 \backslash B_{\epsilon})} < C(n,\,p,\,\epsilon)\, \delta$$
for some linear function $l$. Take $\delta$ so small that the right side is less than $\frac{1}{8}$. 

By the hypotheses on $w$, we have $l(0) > 1/2$. Indeed, after a rotation we have $l(e_n) > \frac{7}{8}$ and $l(-2e_n) \geq -\frac{1}{8}$.  

Let $\tilde{w}(x) = (w - l)(\epsilon x)$. Then $|\tilde{w}| < \frac{1}{8}$ in $B_2 \backslash B_1$, and furthermore, $\tilde{w}(0) < -\frac{1}{2}$.
It follows again from standard embeddings that 
$$\left(\int_{B_2} |D^2\tilde{w}|^p \,dx\right)^{\frac{1}{p}} > c_0(n,\,p).$$ 
Scaling back, we obtain the desired inequality.
\end{proof}

\section{Proof of Theorem \ref{RealMain}}\label{RealMainProof}
We recall some estimates on the geometry of solutions to $\det D^2u \geq 1$. The first says that the volume of sub-level sets grows at most as fast as for the paraboloids with Hessian determinant $1$:
\begin{lem}\label{SectionGrowth}
Assume that $\det D^2u \geq 1$ in a convex subset of $\mathbb{R}^n$ containing $0$, with $u \geq 0$ and $u(0) = 0$. Then 
$$|\{u < h\}| < C(n)h^{n/2}$$
for all $h > 0$.
\end{lem}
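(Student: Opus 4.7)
The plan is to normalize the sub-level set $S_h := \{u < h\}$ by an affine map and then compare two natural estimates for the size of its subdifferential image.

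First, assuming $S_h$ is compactly contained in the domain (so that $u = h$ on $\partial S_h$), I would apply John's lemma to the convex body $S_h$ to find an affine map $A$ with $B_1 \subset A(S_h) \subset B_n$ (centered, after a harmless translation, at the origin). Setting $\tilde{S} := A(S_h)$ and $\tilde{u}(y) := u(A^{-1}y)$, the Monge-Amp\`ere measure transforms as $\mu_{\tilde{u}} \geq (\det A)^{-2}\,dy$ on $\tilde{S}$, and the volumes are related by $|S_h| = (\det A)^{-1}|\tilde{S}| \leq C_n (\det A)^{-1}$. Since $\tilde{u} = h$ on $\partial \tilde{S}$, convexity forces $0 \leq \tilde{u} \leq h$ throughout $\tilde{S}$.

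Next, I would bound the subdifferential image of $\tilde{u}$ over the ball $B_{1/2} \subset B_1 \subset \tilde{S}$. For any $y \in B_{1/2}$ and any nonzero $p \in \partial \tilde{u}(y)$, the point $z := y + \tfrac{1}{2}\,p/|p|$ still lies in $B_1 \subset \tilde{S}$, so the subgradient inequality combined with $\tilde{u}(z) \leq h$ gives
$$h \geq \tilde{u}(z) \geq \tilde{u}(y) + \tfrac{1}{2}|p| \geq \tfrac{1}{2}|p|,$$
whence $|p| \leq 2h$. Therefore $\partial \tilde{u}(B_{1/2}) \subset B_{2h}$, and $|\partial \tilde{u}(B_{1/2})| \leq C_n h^n$.

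Pairing this with the Monge-Amp\`ere mass lower bound
$$|\partial \tilde{u}(B_{1/2})| = \mu_{\tilde{u}}(B_{1/2}) \geq (\det A)^{-2}|B_{1/2}| = c_n (\det A)^{-2},$$
we obtain $(\det A)^{-2} \leq C_n h^n$, hence $|S_h| \leq C_n(\det A)^{-1} \leq C_n h^{n/2}$.

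I do not expect any substantive obstacle here: the only technical item is the standard scaling of the Monge-Amp\`ere measure under affine maps. The hypothesis $u(0) = 0$ enters only to ensure $S_h$ is nonempty; the actual location of $0$ within $S_h$ plays no role once the affine normalization is in place.
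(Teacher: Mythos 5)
Your argument is correct, and it reaches the estimate by a somewhat different route than the one the paper indicates. The paper points to an affine normalization combined with a \emph{quadratic barrier}, i.e., the comparison principle against a paraboloid with Hessian determinant $(\det A)^{-2}$: on the normalized section $\tilde S$ with $B_1\subset\tilde S\subset B_n$, the barrier $P(y)=h+\tfrac12(\det A)^{-2/n}(|y|^2-1)$ dominates $\tilde u$ on $\partial\tilde S$, and evaluating at the origin gives $(\det A)^{-2/n}\leq 2h$, hence $|S_h|\leq C_n(\det A)^{-1}\leq C_n h^{n/2}$. You instead bypass the comparison principle entirely: from $0\leq\tilde u<h$ on $\tilde S$ you get the subgradient bound $\partial\tilde u(B_{1/2})\subset B_{2h}$ directly, and pair it with Alexandrov's identification $|\partial\tilde u(B_{1/2})|=\mu_{\tilde u}(B_{1/2})\geq(\det A)^{-2}|B_{1/2}|$. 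Both arguments hinge on the same affine invariance and yield the same power; yours is a bit more self-contained, working only with the subdifferential and the Monge--Amp\`ere measure rather than invoking a comparison lemma. Two small notes: the inequality $0\leq\tilde u<h$ on $\tilde S$ is immediate from the definition of $S_h$ and the hypothesis $u\geq 0$ (you don't actually need to invoke the boundary value $\tilde u=h$ or convexity for this step); and the compact containment issue you flag is genuine but routine --- if $S_h$ is not compactly contained one works with compactly contained convex subsets exhausting it, or notes that $S_h$ is forced to be bounded by the Monge--Amp\`ere lower bound, so John's lemma applies.
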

The proof follows from the affine invariance of the Monge-Amp\`{e}re equation and a quadratic barrier (see e.g. \cite{M}, Lemma $2.2$).

Using Lemma \ref{SectionGrowth} we can quantify how quickly $u$ grows from a singularity. Below we fix $n \geq 3$ and $0 < k < \frac{n}{2}$, and we write $(x,\,y) \in \mathbb{R}^n$ with $x \in \mathbb{R}^{n-k}$ and $y \in \mathbb{R}^k$.
\begin{lem}\label{SingularityGrowth}
Assume that $\det D^2u \geq 1$ in $\{|x| < 1\} \cap \{|y| < 1\} \subset \mathbb{R}^n$, with $u \geq 0$ and $u = 0$ on $\{x = 0\}$. Then for all $r < 1$ we have
$$\inf_{y} \sup_{|x| = r} u(x,\,y) > c(n)r^{2-\frac{2k}{n}}.$$
\end{lem}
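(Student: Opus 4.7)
The plan is a volume comparison. Fix $y_0$ with $|y_0|\le 1/2$ and set $M := \sup_{|x|=r} u(x,y_0)$; by convexity of $x\mapsto u(x,y_0)$ this equals $\sup_{|x|\le r} u(x,y_0)$, so it suffices to prove $M\ge c(n)\,r^{2-2k/n}$. I will exhibit a set $K\subset\{u\le M\}$ with $|K|\gtrsim r^{n-k}$ and then invoke Lemma \ref{SectionGrowth} to force $r^{n-k}\lesssim M^{n/2}$.

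For $K$ I take the convex hull of the $(n-k)$-disk $D_1 := \{(x,y_0):|x|\le r\}$, on which $u\le M$, and the $k$-disk $D_2 := \{(0,y):|y|\le 1/2\}$, on which $u=0$. Since $u$ is convex, $K\subset\{u\le M\}$. A point in $K$ can be written $\lambda(x,y_0)+(1-\lambda)(0,y_1)$ with $|x|\le r$, $|y_1|\le 1/2$, $\lambda\in[0,1]$. Slicing by $\{x\text{-coord}=\xi\}$ for $|\xi|\le r$ and choosing $\lambda=|\xi|/r$ with $x$ collinear with $\xi$ shows that the slice contains a $k$-dimensional ball of radius $\tfrac12\bigl(1-|\xi|/r\bigr)$. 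Hence
\[
|K| \;\ge\; c(n,k)\int_{|\xi|\le r}\bigl(1-|\xi|/r\bigr)^{k}\,d\xi \;\ge\; c'(n,k)\,r^{n-k}.
\]
Translating the origin to $(0,y_0)$, where $u$ vanishes, Lemma \ref{SectionGrowth} gives $|\{u\le M\}|\le C(n)\,M^{n/2}$, provided $\{u\le M\}$ is contained in the convex domain $\{|x|<1,|y|<1\}$; we may assume this containment, for otherwise $M$ is already bounded below by a fixed dimensional constant and the desired inequality is trivial. Combining the two estimates,
\[
c'(n,k)\,r^{n-k} \;\le\; |K| \;\le\; |\{u\le M\}| \;\le\; C(n)\,M^{n/2},
\]
which rearranges to $M\ge c(n,k)\,r^{2(n-k)/n}=c(n,k)\,r^{2-2k/n}$.

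The main obstacle is the volume computation: the heart of the argument is that the $n$-dimensional Lebesgue measure of the convex hull of an $(n-k)$-disk of radius $r$ with a transverse $k$-disk of unit size is of order exactly $r^{n-k}$, which is the one scaling that balances Lemma \ref{SectionGrowth}'s $M^{n/2}$ bound to produce the critical exponent $2-2k/n$. Extending the estimate from $|y_0|\le 1/2$ to the full admissible range of $y_0$ (so that the $\inf_y$ in the statement makes sense), and verifying the sub-level set containment hypothesis needed to apply Lemma \ref{SectionGrowth}, are routine adjustments of dimensional constants.
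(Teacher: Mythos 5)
Your proof is correct and follows the same approach as the paper's: lower-bound the volume of the sub-level set by the volume of the convex hull of the $(n-k)$-disk $\{|x|\le r,\ y=y_0\}$ at height $M$ with the zero set $\{x=0\}$, then invoke Lemma~\ref{SectionGrowth}. You carry out the convex-hull volume computation more carefully than the paper does; the paper's phrase ``the convex hull of the set on the left and $\pm e_n$'' literally produces an $(n-k+1)$-dimensional set, which is Lebesgue-null when $k>1$, so it must be read as the convex hull with a full $k$-dimensional disk in $\{x=0\}$, which is exactly what your slicing argument supplies. One minor simplification: the restriction $|y_0|\le 1/2$ is unnecessary, since your $D_2=\{(0,y):|y|\le 1/2\}$ is fixed and the slice radius $\tfrac12\bigl(1-|\xi|/r\bigr)$ does not depend on $y_0$, so the bound holds uniformly for all $|y_0|<1$; similarly, Lemma~\ref{SectionGrowth} as stated needs no compact-containment hypothesis, so that caveat can be dropped.
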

\begin{proof}
Take $c = c(n)$ small and assume by way of contradiction that for some $r_0$ the conclusion is false. Define $h = cr_0^{2-\frac{2k}{n}}$.
Then for some $y_0$ we have
$$\{y = y_0\} \cap \left\{|x| < (c^{-1}h)^{\frac{n}{2}\frac{1}{n-k}}\right\} \subset \{u < h\}.$$
Since $\{u < h\}$ is convex, it contains the convex hull of the set on the left and $\pm e_n$. We conclude that
$$|\{u < h\}| \geq \tilde{c}(n)(c^{-1}h)^{n/2},$$
which contradicts Lemma \ref{SectionGrowth} for $c$ small.
\end{proof}

The main theorem follows from the growth established in Lemma \ref{SingularityGrowth} and the convex analysis estimate Lemma \ref{CriticalBlowup}.
\begin{proof}[{\bf Proof of Theorem \ref{RealMain}:}]
Assume that $u$ agrees with a tangent plane on a set of dimension $k$. 
After subtracting the tangent plane, translating and rescaling we may assume that $u \geq 0$ on $\{|x| < 1\} \cap \{|y| < 1\}$, and that $u = 0$ on $\{x = 0\}$.  By Lemma \ref{SingularityGrowth}, we also have that
$$\inf_{\{|y| < 1\}} \sup_{|x| = r} u(x,\,y) \geq c(n)r^{2 - \frac{2k}{n}}.$$
Apply Lemma \ref{CriticalBlowup} on the slices $\{y = const.\}$ (taking $q = 2-\frac{2k}{n}$ and replacing $n$ by $n-k$) and integrate in $y$ to conclude that
$$\int_{\{|y| < 1\} \cap \{r < |x| < 1\}} (\Delta u)^{\frac{n}{2k}(n-k)} \,dx \geq c(n,\,k)|\log r|.$$
Taking $r \rightarrow 0$ completes the proof.
\end{proof}

\begin{rem}
By Remark \ref{OrliczGeneral}, one obtains the same result if $\Delta u$ is in the (weaker) Orlicz space $L^F$ for any convex $F: [0,\,\infty) \rightarrow 
[0,\,\infty)$ satisfying $F(0) = 0$ and
\begin{equation}\label{OrliczCondition}
 \int_{1}^{\infty} t^{-\frac{n(n-k)}{2k}}F(t)\, \frac{dt}{t} = \infty.
\end{equation}
\end{rem}

\section{Proof of Theorem \ref{ComplexMain}}\label{ComplexMainProof}
We first prove an analogue of Lemma \ref{SingularityGrowth}. We fix $n \geq 2$ and $0 < k < n$, and we use coordinates $(z,\,w) \in \mathbb{C}^n$ with $z \in \mathbb{C}^{n-k}$ and $w \in \mathbb{C}^k$.

\begin{lem}\label{ComplexSingularityGrowth}
Assume that $\det (\partial \dbar u) \geq 1$ in $\{|z| < 1\} \cap \{|w| < 1\} \subset \mathbb{C}^n$, with $u \geq 0$ and $u = 0$ on $\{z = 0\}$. Then for all $r < 1$ we have
$$\sup_{|w| < 1/4} \sup_{|z| = r} u(z,\,w) \geq c(n)r^{2 - \frac{2k}{n}}.$$
\end{lem}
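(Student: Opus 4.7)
My plan is to mirror the real proof of Lemma \ref{SingularityGrowth}, substituting two plurisubharmonic tools for its convex ingredients: the slice-wise subharmonic maximum principle will replace the convex maximum principle, and the Chern--Levine--Nirenberg (CLN) Monge-Amp\`ere mass inequality will replace the volume bound of Lemma \ref{SectionGrowth}. I argue by contradiction: suppose
\begin{equation*}
 \sup_{|w|<1/4}\sup_{|z|=r_0} u(z,w) < \epsilon \, r_0^{2 - 2k/n}
\end{equation*}
for some $r_0 \in (0,1)$ and a small dimensional constant $\epsilon = \epsilon(n) > 0$ to be chosen below.

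The first step upgrades this sphere bound to an interior bound on the full polydisk. On each complex slice $\{w = w_0\}$ with $|w_0|<1/4$, the restriction $u(\,\cdot\,, w_0)$ is subharmonic on $\{|z|<1\} \subset \mathbb{C}^{n-k}$, nonnegative, and vanishes at $z=0$; the subharmonic maximum principle on $\{|z|<r_0\}$ then yields $u < \epsilon r_0^{2-2k/n}$ on the polydisk $D := \{|z|<r_0\}\times\{|w|<1/4\}$. Next apply the critical anisotropic rescaling $\tilde u(z,w) := r_0^{-(2-2k/n)} u(r_0 z, w)$, tailored to preserve the Monge-Amp\`ere hypothesis: a short block computation shows that the substitution $z \mapsto r_0 z$ multiplies $\det(\partial\dbar u)$ by $r_0^{2(n-k)}$, which exactly cancels the vertical scaling factor $r_0^{-n(2-2k/n)} = r_0^{-2(n-k)}$. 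Hence $\tilde u$ is PSH, $0 \leq \tilde u < \epsilon$ on $\{|z|<1\}\times\{|w|<1/4\}$, and $\det(\partial\dbar \tilde u) \geq 1$ there.

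For the final step, pick a slightly smaller polydisk $D' \Subset \{|z|<1\}\times\{|w|<1/4\}$. The CLN inequality for bounded PSH functions gives
\begin{equation*}
 \int_{D'} (\partial\dbar \tilde u)^n \leq C(n)\,\|\tilde u\|_{L^\infty}^n \leq C(n)\,\epsilon^n,
\end{equation*}
while $\det(\partial\dbar \tilde u) \geq 1$ forces the same integral to be bounded below by a positive $c(n)$. Choosing $\epsilon(n)$ small enough produces the contradiction. The main obstacle, compared to the real case, is the absence of convexity of sublevel sets: the volume bound of Lemma \ref{SectionGrowth} has no direct analogue in the complex setting. CLN is the natural substitute, bounding Monge-Amp\`ere mass by powers of the $L^\infty$ norm of the potential, and the slice-wise maximum principle plays the role of convex-hull containment in propagating the sphere bound inward. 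One artefact of this substitution is that we only obtain the $\sup_w \sup_z$ estimate stated in the lemma, whereas convexity produced the stronger $\inf_w \sup_z$ statement in Lemma \ref{SingularityGrowth}.
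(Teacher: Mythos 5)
Your proof is correct, but it diverges from the paper's argument in the second step. Both you and the paper begin identically: the slice-wise subharmonic maximum principle (equivalently, plurisubharmonicity of $u$) upgrades the smallness of $u$ on the spheres $\{|z|=r_0\}$ to smallness on the full polydisk $\{|z|<r_0\}\times\{|w|<1/4\}$. From there, however, the paper constructs an explicit quadratic supersolution $Q_t = 2h\left(16|w|^2 + (c^{-1}h)^{-\frac{n}{n-k}}|z|^2\right) + t$ which dominates $u$ on the boundary of this polydisk and has $\det(\partial\dbar Q_t) < 1$ for $c$ small; decreasing $t$ until $Q_t$ touches $u$ from above at an interior point then contradicts the maximum principle for $\det(\partial\dbar u) \geq 1$. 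You instead apply the anisotropic rescaling $\tilde u(z,w) = r_0^{-(2-2k/n)}u(r_0 z, w)$, which is exactly the scaling that leaves $\det(\partial\dbar \cdot) \geq 1$ invariant (your block-determinant count $r_0^{2(n-k)}\cdot r_0^{-2(n-k)} = 1$ is right), and then appeal to the Chern--Levine--Nirenberg inequality to bound the Monge--Amp\`ere mass of $\tilde u$ above by $C(n)\epsilon^n$, contradicting the lower bound coming from $\det(\partial\dbar \tilde u)\geq 1$. Both routes are sound. The trade-off is that the paper's barrier is self-contained and runs in strict parallel with the real-case argument (Lemma~\ref{SingularityGrowth}), while CLN is a heavier, named tool from pluripotential theory; on the other hand, your argument is conceptually cleaner and makes the invariance under the critical rescaling explicit, which foreshadows its use in the proof of Theorem~\ref{ComplexMain}. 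Your closing remark about the loss of the $\inf_w$ quantifier compared to the real convex case is also correct and matches the statement of the lemma.
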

\begin{proof}
Take $c = c(n)$ small and assume by way of contradiction that for some $r_0$ the conclusion is false. Let $h = cr_0^{2-\frac{2k}{n}}$. Then we have
$$\{|w| < 1/4\} \cap \{|z| < (c^{-1}h)^{\frac{n}{2(n-k)}}\} \subset \{u < h\}.$$
Here we used the plurisubharmonicity of $u$. (Note that the volume of the set on the left is much larger than $h^{n}$ for $c$ small.) 
The proof then proceeds as in the real case. For $c$ small, the convex quadratics $Q_t = 2h\left(16 |w|^2 + (c^{-1}h)^{-\frac{n}{n-k}}|z|^2\right) + t$ are supersolutions
that lie strictly above $u$ on $\partial(\{|w| < 1/4\} \cap \{|z| < r_0\})$ for $t \geq 0$. For some $t \geq 0$, $Q_t$ touches $u$ from above somewhere
inside this set, contradicting the maximum principle.
\end{proof}

\begin{proof}[{\bf Proof of Theorem \ref{ComplexMain}}]
Assume that the minimum set of $u$ contains an analytic sub-variety of dimension $k$. After a biholomorphic transformation and a rescaling, we may assume that
$u \geq 0$ on $\{|z| < 1\} \cap \{|w| < 1\}$ and $u = 0$ on $\{z = 0\}$ (see e.g. \cite{DD}, Theorem $32$ for details), and that
$$\|u\|_{W^{2, \frac{n(n-k)}{k}}(\{|w| < 1\} \cap \{|z| < 1\})} = C_0 < \infty.$$
(Here we used elliptic theory: $\Delta u$ controls $D^2u$ in $L^p$ for $1 < p < \infty$).

For any $r < 1/2$ we define 
$$u_r(z,\,w) = \frac{1}{r^{2-\frac{2k}{n}}}u(rz,\,w).$$ 
We claim that there exist $\epsilon,\, \delta > 0$ small depending on $n,\,k,\,C_0$ (but not $r$) such that
\begin{equation}
\int_{\{|w| < 1\} \cap \{\epsilon < |z| < 2\}} |D_z^2u_r|^{\frac{n(n-k)}{k}} |dz||dw| > \delta.
\end{equation}
Here $D_z^2$ denotes the Hessian in the $z$ variable.
We first indicate how to complete the proof given the claim. The invariance of this norm under the rescalings used to obtain $u_r$ gives that
$$\int_{\{|w| < 1\} \cap \{(\epsilon/2) r < |z| < r\}} |D^2 u|^{\frac{n(n-k)}{k}} |dz||dw| > \delta,$$
for all $r < 1$.
By summing this over the annuli $\{|w| < 1\} \cap \{(\epsilon/2)^{k+1} < |z| < (\epsilon/2)^{k}\}$ we eventually contradict the upper bound on the $W^{2,\,\frac{n(n-k)}{k}}$ norm of $u$.

We now prove the claim. By Lemma \ref{ComplexSingularityGrowth}, there exists some $(z_0,\,w_0) \in \{|z| = 1\} \cap \{|w| < 1/4\}$ with $u_r(z_0,\,w_0) \geq c(n) > 0$. Let
$$M(w) = u_r(z_0,\,w).$$
Since $M(w)$ is positive and subharmonic, we have by the mean value inequality that 
\begin{equation}\label{MVI}
\int_{\{|w| < 1\}} M(w)|dw| > c(n) > 0.
\end{equation}
By Lemma \ref{AnnulusDichotomy}, for all $\epsilon$ small, there exists $\delta(n,\,k,\,\epsilon)$ such that either
$$\left(\int_{\{\epsilon < |z| < 2\}} |D_z^2u_r(z,\,w)|^{\frac{n(n-k)}{k}} \,|dz|\right)^{\frac{k}{n(n-k)}} \geq \delta M(w)$$
or
$$\left(\int_{\{|z| < 2\}} |D_z^2u_r(z,\,w)|^{\frac{n(n-k)}{k}} \,|dz|\right)^{\frac{k}{n(n-k)}} \geq c(n,\,k) \epsilon^{-\frac{2(n-k)}{n}} M(w).$$
Let $A_{\epsilon}$ be the set of $w$ such that the first case holds. We conclude from the scale-invariance of the norm we consider that
\begin{align*}
C_0 &\geq c(n,\,k) \int_{A_{\epsilon}^c} \left(\int_{\{|z| < 2\}} |D_z^2u_r|^{\frac{n(n-k)}{k}}\,|dz| \right)^{\frac{k}{n(n-k)}}\,|dw| \\
&\geq c(n,\,k) \epsilon^{-\frac{2(n-k)}{n}}\int_{A_{\epsilon}^c} M(w) |dw|.
\end{align*}
By taking $\epsilon(n,\,k,\,C_0)$ small, we conclude that the mass of $M(w)$ in $A_{\epsilon}^c$ is less than its mass in $A_{\epsilon}$. We conclude from the estimate (\ref{MVI}) that
\begin{align*}
\left(\int_{\{|w| < 1\} \cap \{\epsilon < |z| < 2\}} |D_z^2u_r|^{\frac{n(n-k)}{k}} |dz||dw|\right)^{\frac{k}{n(n-k)}} &\geq c(n,\,k) \delta \int_{A_{\epsilon}} M(w) \,|dw| \\
&\geq c(n,\,k)\delta(n,\,k,\, C_0),
\end{align*}
completing the proof.
\end{proof}

\begin{rem}\label{Approximation}
To emphasize ideas we assumed $u$ has enough (qualitative) regularity to perform the above computations. This can be justified by standard approximation methods using mollifications
$u_{\epsilon}$ of $u$. The key points are that $u_{\epsilon}$ solve $\det^{1/n} (\partial \dbar u_{\epsilon}) \geq 1$ by the concavity of $\det^{1/n}$, approximate $u$ in $W^{2,\,p}$, and go to zero on $\{z = 0\}$ locally uniformly
in $\epsilon$ by the upper semicontinuity of plurisubharmonic functions.
\end{rem}

\begin{rem} 
Theorem~\ref{ComplexMain} actually implies a slightly more general result.  Namely, if $u$ is plurisubharmonic on $B_1$ and satisfies $\det \partial\dbar u \geq 1$, and $\Delta u\in L^{p}$ for some $p\geq \frac{n}{k}(n-k)$, then $u$ cannot be pluriharmonic when restricted to any analytic set of dimension greater than or equal to $k$.  This follows from Theorem~\ref{ComplexMain} and the proof of Theorem 35 in \cite{DD}.
\end{rem}

\section{Applications}\label{Applications}
As a consequence of Theorem \ref{RealMain} we obtain interior estimates for the real Monge-Amp\`{e}re equation depending on the $W^{2,\,p}$ norm of the solution, for 
any $p \geq \frac{n(n-1)}{2}$. This extends a result of Urbas \cite{U} to the equality case $p = \frac{n(n-1)}{2}$.

\begin{rem}
 In fact, we obtain interior estimates depending on certain Orlicz norms that are slightly weaker than $L^{\frac{n(n-1)}{2}}$.
\end{rem}

We recall the definition of sections of a convex function. Let $u$ be a convex function on $B_1 \subset \mathbb{R}^n$. If $l$ is a supporting linear function to $u$ at $x \in B_1$, we set
$$S_h^l(x) = \{u < l + h\} \cap B_1.$$

\begin{lem}\label{UniversalSectionHeight}
Assume that $\det D^2u \geq 1$ in $B_1 \subset \mathbb{R}^n$, and that $\|u\|_{W^{2,\,p}(B_1)} < C_0$ for some $p \geq \frac{n(n-1)}{2}$. Then there exists $h_0 > 0$ depending only
on $n,\,p$ and $C_0$ such that
$$S_{h_0}^l(x) \subset \subset B_1$$
for all $x \in B_{1/2}$ and supporting linear functions $l$ at $x$.
\end{lem}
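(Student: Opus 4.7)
The plan is to argue by contradiction, extracting a limiting solution to which Theorem \ref{RealMain} with $k=1$ applies. Suppose no such $h_0$ exists. Then there are convex functions $u_j$ on $B_1$ with $\det D^2 u_j \geq 1$ and $\|u_j\|_{W^{2,p}(B_1)} \leq C_0$, together with $x_j \in B_{1/2}$, supporting affine functions $l_j$ at $x_j$, heights $h_j \downarrow 0$, and points $y_j \in \overline{S^{l_j}_{h_j}(x_j)} \cap \partial B_1$. Replacing $u_j$ by $u_j - l_j$, I may assume $u_j \geq 0$ on $B_1$, $u_j(x_j) = 0$, and $u_j(y_j) \leq h_j$.

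The next step is to produce a uniform limit. For $n \geq 3$ the hypothesis $p \geq n(n-1)/2$ yields $2p > n$, so Sobolev embedding gives a uniform $L^\infty$ bound on $u_j$ on $B_1$; convexity transfers this to a bound on $\|l_j\|_{L^\infty(B_1)}$, hence a bound on $u_j - l_j$. Convexity then provides uniform local Lipschitz estimates. Passing to a subsequence, $u_j \to u_\infty$ locally uniformly on $B_1$, $x_j \to x_\infty \in \overline{B_{1/2}}$, and $y_j \to y_\infty \in \partial B_1$. The function $u_\infty$ is convex and nonnegative with $u_\infty(x_\infty) = u_\infty(y_\infty) = 0$; weak lower semicontinuity of the Sobolev norm gives $\|u_\infty\|_{W^{2,p}(B_1)} \leq C_0$; and weak continuity of Monge-Amp\`ere measures under local uniform convergence of convex functions gives $\det D^2 u_\infty \geq 1$ in the Alexandrov sense.

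Since $u_\infty \geq 0$ vanishes at both $x_\infty$ and $y_\infty$, convexity forces $u_\infty \equiv 0$ on the entire segment $[x_\infty, y_\infty]$. As $x_\infty \in \overline{B_{1/2}}$ and $y_\infty \in \partial B_1$, this segment meets the interior of $B_1$ in a $1$-dimensional set on which $u_\infty$ agrees with its tangent plane $0$. Applying Theorem \ref{RealMain} (after an affine rescaling to any open ball compactly contained in $B_1$ that meets the segment in its interior) with $k = 1$ and $p \geq n(n-1)/2$ gives the required contradiction. The case $n = 2$ is handled directly by Alexandrov's strict convexity theorem recalled in the Introduction.

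The main technical obstacle is the compactness/passage-to-limit step: one must confirm that the normalized sequence $u_j - l_j$ stays uniformly controlled in $L^\infty$ and locally in Lipschitz norm, that the $W^{2,p}$ bound persists in the limit, and that the Monge-Amp\`ere inequality survives under local uniform convergence of convex functions. Each of these is standard, but the last in particular requires invoking the continuity of the Monge-Amp\`ere operator as a measure-valued map on locally uniformly convergent convex functions, rather than any pointwise convergence of $D^2 u_j$.
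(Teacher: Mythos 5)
Your proposal is correct and follows essentially the same route as the paper, which simply asserts the result follows from a ``standard compactness argument'' using closedness of $\det D^2 u \geq 1$ under uniform convergence, weak lower semicontinuity of the $W^{2,p}$ norm, and Theorem \ref{RealMain}; you have filled in exactly those details. One small imprecision: you write $u_\infty(y_\infty)=0$, but $y_\infty \in \partial B_1$ lies outside the region of local uniform convergence; the clean statement is that convexity and $(u_j-l_j)(x_j)=0$, $(u_j-l_j)(y_j)\leq h_j$ force $u_j-l_j \leq h_j$ on the whole segment $[x_j,y_j]$, so in the limit $u_\infty \equiv 0$ on $[x_\infty,y_\infty]\cap B_1$, which is the $1$-dimensional set you then feed into Theorem \ref{RealMain} with $k=1$.
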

\begin{proof}
The result follows from a standard compactness argument using the closedness of the condition $\det D^2 u \geq 1$ under uniform convergence, 
the lower semicontinuity of the $W^{2,\,p}$ norm under weak convergence and Theorem \ref{RealMain}.
\end{proof}

\begin{rem}\label{OrliczDependence}
 The conclusion is the same if the Orlicz norm $\|\Delta u\|_{L^F(B_1)} < C_0$ for some $F$ satisfying condition 
 (\ref{OrliczCondition}) for $k = 1$, and in addition e.g. $\|u\|_{W^{2,\,2}(B_1)} < C_0$.
 The argument is by compactness again, but one has to work harder to extract a limit whose Hessian has bounded Orlicz norm.
 Rather than using weak $W^{2,\,2}$ convergence of a subsequence $\{u_k\}$, invoke the Banach-Saks theorem and use the strong convergence in $W^{2,\,2}$ of Ces\`{a}ro means 
 $\frac{1}{N}\sum_{k = 1}^N u_k$. The convexity of $F$ then implies that the Hessian of the limit has bounded Orlicz norm. 
 
 (In order to use Banach-Saks we need control of $\Delta u$ in $L^p$ for some $p > 1$, which does not follow from bounded Orlicz norm. This is the
 reason for the second condition).
 \end{rem}

Interior e.g. $C^{2,\alpha}$ estimates and $W^{2,\,q}$ estimates in terms of $\|u\|_{W^{2,\,\frac{n(n-1)}{2}}(B_1)}$ follow, where the estimates also depend on $n,\, \alpha$ and the $C^{\alpha}$ norm (resp. $n$, $q$ and the modulus of continuity)
of $\det D^2u$. 

Indeed, by Lemma \ref{UniversalSectionHeight} we have that $S_{h_0}^l(x) \subset \subset B_1$ for some
universal $h_0$ and all $x \in B_{1/2}$. Since $\det D^2u$ also has an upper bound (depending on the $C^{\alpha}$ norm or modulus of continuity of $\det D^2u$),
we have the lower volume bound $|S_{h_0}^l(x)| > ch_0^{n/2}$ for compactly contained sections (\cite{Ca}). Combining this with the diameter estimate $\text{diam.}(S_{h_0}^l(x)) < 2$, 
we see that the eigenvalues of the affine transformations normalizing these sections (taking $B_1$ to their John ellipsoids)
are bounded above and below by positive universal constants. The estimates follow by applying Caffarelli's results (see \cite{Ca1}) in the normalized sections, 
scaling back, and doing a covering argument.


\section*{acknowledgements}
C. Mooney would like to thank A. Figalli for comments and X. J. Wang for encouragement.  
T. Collins would like to thank the math department at Chalmers University, where the majority of this work was completed, for a perfect working environment.



\begin{thebibliography}{99}
\bibitem{A} A. D. Alexandrov, {\it Smoothness of the convex surface of bounded Gaussian curvature}, C. R. (Doklady) Acad. Sci. URSS (N. S.) {\bf 36} (1942) 195-199.
\bibitem{B} Z. B\l ocki, {\it On the regularity of the complex Monge-Amp\`{e}re operator}, Complex Geometric Analysis in Pohang, 1997, vol. 222, 181-189. 
Contemp. Math. Amer. Soc., Providence, RI (1999).
\bibitem{BD} Z. B\l ocki, and S. Dinew, {\it A local regularity of the complex Monge-Amp\`{e}re equation}, Math. Ann., {\bf 351} (2011), 411-416.
\bibitem{Ca} L. Caffarelli, {\em A localization property of viscosity solutions to the Monge-Amp\`ere equation and their strict convexity}, Ann. of Math., {\bf 131} (1990), 129-134.
\bibitem{Ca3} L. Caffarelli, {\it A note on the degeneracy of convex solutions to the Monge-Amp\`{e}re equation}, Comm. Partial Differential Equations, {\bf 18} (1993), 1213-1217.
\bibitem{Ca2} L. Caffarelli, {\em Boundary regularity of maps with convex potentials}, Comm. Pure Appl. Math, {\bf 45} (1992), 1141-1151.
\bibitem{Ca1} L. Caffarelli, {\em Interior $W^{2,p}$ estimates for solutions of the Monge-Amp\`ere equation}, Ann. of Math., {\bf 131} (1990), 135-150.
\bibitem{CC} L. Caffarelli, and X. Cabr\'{e}. {\it Fully nonlinear elliptic equations}, vol. 43 of {\it Amer. Math. Soc. Colloq. Publ.} American Mathematical Society, 1995.
\bibitem{CNS} L. Caffarelli, L. Nirenberg, and J. Spruck, {\em The Dirichlet problem for nonlinear second order elliptic equations I. Monge-Amp\`ere equation}, Comm. Pure Appl. Math., {\bf 37} (1984), 369-402.
\bibitem{Cal} E. Calabi, {\em Improper affine hyperspheres of convex type and a generalization of a theorem of K. J\"orgens}, Mich. Math. J., {\bf 5} (1958), 105-126.
\bibitem{CY} S.Y. Cheng, and S.-T. Yau, {\em On the regularity of the Monge-Amp\`ere equation $\det \partial^2u/\partial x_i\partial x_j=F(x,u)$.}, Comm. Pure Appl. Math., {\bf 30}(1977), no. 1, 41-68.
\bibitem{CY1} S.Y. Cheng, and S.-T. Yau, {\em On the regularity of the solution of the $n$-dimensional Minkowski problem}, Comm. Pure Appl. Math., {\bf 29} (1976), no. 5, 495-516.
\bibitem{DD} S. Dinew, and Z. Dinew, {\it The minimum sets and free boundaries of strictly plurisubharmonic functions}, Calc. Var. Partial Differential Equations (2016) 55:148.
\bibitem{Gut} C. Guti\'errez, {\em The Monge-Amp\`ere equation}, Progress on Nonlinear Differential Equations and their Applications, {\bf 44}.  Birkh\"{a}user Boston Inc., Boston, MA, 2001.
\bibitem{HW} F. R. Harvey, and R. O. Wells Jr., {\em Zero sets of non-negative strictly plurisubharmonic functions}, Math. Ann. {\bf 201}(1973), 165-170.
\bibitem{Ko} S. Ko\l odziej,{\it Some sufficient conditions for the solvability of the Dirichlet problem for the complex Monge-Amp\`{e}re operator},
Ann. Pol. Math. {\bf 65} (1996), 11-21.
\bibitem{Kr2} N. V. Krylov {\em Boundedly nonhomogeneous elliptic and parabolic equations in a domain}, Izv. Akad. Nak. SSSR Ser. Mat. {\bf 47} (1983), 75-108; English transl. in Math. USSR Izv. {\bf 22} (1984), 67-97.
\bibitem{L1} P.L. Lions, {\em Sur les \'equations de Monge-Amp\`ere. I.} Manuscripta Math. {\bf 41} (1983), no. 1-3, 1-43.
\bibitem{L} P.L. Lions, {\em Sur les \'equations de Monge-Amp\`ere}, Arch. Rational Mech. Anal. {\bf 89} (1985), no. 2, 93-122
\bibitem{M} C. Mooney, {\em Partial regularity for singular solutions of the Monge-Ampere equation}, Comm. Pure Appl. Math., {\bf 68} (2015), 1066-1084.
\bibitem{Pog3} A.V. Pogorelov, {\em Extrinsic geometry of convex hypersurfaces}, Translated from the Russian by Israel Program for Scientific Translations, Translations of Mathematical Monographs, Vol. 35, American Mathematical Society, Providence, R.I., 1973.
\bibitem{Pog} A.V. Pogorelov, {\em On the regularity of generalized solutions of the equation $\det \partial^2u/\partial x_i\partial x_j=\psi(x_1,...,x_n)>0$}, Dokl. Akad. Nauk. SSSR {\bf 200} (1971), 543-547 [Russian].  English translation in Soviet Math. Dokl. {\bf 12} (1971), 1727-1731.
\bibitem{Pog1} A.V. Pogorelov, {\em The Dirichlet problem for the $n$-dimensional analogue of the Monge-Amp\`ere equation}, Dokl. Akad. Nauk. SSSR {\bf 201}(1971) 790-793 [Russian].  English translation in Soviet Math. Dokl. {\bf 12}(1971), 1727-1731.
\bibitem{Pog2} A.V. Pogorelov, {\em The Minkowski multidimensional problem}, Translated from the Russian by Vladimir Oliker, Introduction by Louis Nirenberg, Scripta Series in Mathematics, V. H. Winston \& Sons, Washington, D.C.; Halsted Press [John Wiley \&\ Sons], New York-Toronto-London,1978.
\bibitem{Sim} B. Simon, {\em Convexity: An analytic viewpoint}, Cambridge Tracts in Mathematics, {\bf 187}, Cambridge University Press.
\bibitem{TW} N. Trudinger, and X.-J. Wang, {\em Boundary regularity for the Monge-Amp\`ere and affine maximal surface equations}, Ann. of Math., {\bf 167}(2008), 993-1028.
\bibitem{U} J. I.E. Urbas, {\em Regularity of generalized solutions of Monge-Amp\`ere equations}, Math. Z., {\bf 197} (1988), 365-393.
\bibitem{W} X.-J. Wang, {\em regularity for the Monge-Amp\`ere equation near the boundary}, Analysis, {\bf 16} (1996), 101-107.
\bibitem{Wa} Y. Wang, {\it On the $C^{2,\alpha}$ regularity of the complex Monge-Amp\`{e}re equation}, Math. Res. Lett. {\bf 19} (2012), 939-946.
\end{thebibliography}
\end{document}